\newtheorem{theorem}{Theorem}[section]
\newtheorem{lemma}{Lemma}[section]
\newtheorem{prop}{Proposition}[section]
\numberwithin{equation}{section}
\theoremstyle{definition}
\theoremstyle{remark}
\begin{document}
\title{A generalization of Hilbert's inequality}
\author{Peng Gao}
\address{Department of Mathematics, School of Mathematics and System Sciences, Beijing University of Aeronautics and Astronautics, P. R. China}
\email{penggao@buaa.edu.cn}
%%\date{August 30, 2007}
\subjclass[2000]{Primary 26D15} \keywords{Hilbert's inequality}

%%-------------------------------------------------------------------
\begin{abstract}In a generalization of the classical Hilbert inequality by Hardy, Littlewood and P\'{o}lya, the best constant for an inequality is determined provided that the generating function for the corresponding matrix satisfies certain monotonicity condition. In this paper, we determine the best constant for a class of inequalities when the monotonicity condition is no longer satisfied.
\end{abstract}

\maketitle
%%-----------------------------------------------------------------------
\section{Introduction}
\label{sec1}
%%------------------------------------------------------------------------
   
   Suppose throughout that $p >1, \frac{1}{p}+\frac{1}{q}=1$.
  Let $l^p$ be the Banach space of all complex sequences ${\bf x}=(x_n)_{n \geq 1}$ with norm
\begin{equation*}
   \|{\bf x}\|_p: =(\sum_{n=1}^{\infty}|x_n|^p)^{1/p} < \infty.
\end{equation*}

     Let $C=(c_{n,k})$ be a matrix acting on the $l^p$ space, the $l^{p}$ operator norm of $C$ is
   defined as
\begin{equation*}
\label{02}
    \| C \|_{p,p}=\sup_{\|{\bf x}\|_p = 1}\Big | \Big |C \cdot {\bf x}\Big | \Big |_p.
\end{equation*}

   The well-known Hilbert's inequality \cite[Theorem 315]{HLP} asserts that for ${\bf x} \in l^p, {\bf y} \in l^q$:
\begin{equation}
\label{1.6}
   \big | \sum^{\infty}_{i,j=1}\frac {x_iy_j}{i+j} \big | \leq \frac {\pi}{\sin \pi/p} \| {\bf x}\|_p \| {\bf y}\|_q. 
\end{equation}

   Let $H=(1/(i+j))_{i\geq 1,j \geq 1}$, then \cite[Theorem 286]{HLP} implies that inequality \eqref{1.6} is equivalent to $\| H \|_{p,p} \leq \pi/\sin (\pi/p)$. In fact, it is shown in \cite[Theorem 317]{HLP} that the constant $\pi/\sin (\pi/p)$ is best possible, hence $\| H \|_{p,p} = \pi/\sin (\pi/p)$.

   A generalization of Hilbert's inequality is given in \cite[Theorem 318]{HLP}. For a matrix $K=(K(i,j))_{i\geq 1,j \geq 1}$ with $K(x,y)$ satisfying the following conditions:
 \begin{align} 
 \label{1.02}
    1. &  K(x,y) \text{ is a non-negative, homogeneous function of degree $-1$};  \\
    2. &  \int^{\infty}_{0}K(x,1)x^{-1/q}dx=\int^{\infty}_{0}K(1,y)y^{-1/p}dy=k. \nonumber
 \end{align}
   Then, with one more condition (in what follows, we shall refer to this assumption as the decreasing assumption) that $K(x,1)x^{-1/q}, K(1,y)y^{-1/p}$ are strictly decreasing functions of $x>0,y>0$ respectively, \cite[Theorem 318]{HLP} asserts that $\|K\|_{p,p} \leq k$. In fact, in this case $\|K\|_{p,p} = k$ (see the remark on \cite[p. 229]{HLP}). 

   The proof for \cite[Theorem 318]{HLP} given in \cite{HLP} uses Schur's test to reduce the estimation of $\| K \|_{p,p}$ to the estimation of certain series and the decreasing assumption is to ensure that the series are bounded above by the corresponding integrals. In view of this, one sees that the decreasing assumption need not be necessary when determining $\| K \|_{p,p}$. It is therefore natural to study $\| K \|_{p,p}$ for a matrix $K=(K(i,j))_{i\geq 1,j \geq 1}$ with $K(x,y)$ satisfying the conditions in \eqref{1.02} only.
   
     We now focus on a type of matrices of the form:  $H(\alpha, \beta)=(i^{\alpha}j^{\beta}/(i+j)^{\alpha+\beta+1})_{i\geq 1,j \geq 1}$. We note here that when either $\alpha>1/q$ or $\beta>1/p$, then $H(\alpha, \beta)$ satisfies the conditions in \eqref{1.02} but not the decreasing assumption.
     
    Some results on $\| H(\alpha, \beta)\|_{p,p}$ can be deduced from a result of Bennett. Recall (see \cite{A&S}) that the beta function $B(x,y), x>0, y>0$ is defined as 
\begin{align}
\label{1.3}
   B(x,y)=\int^1_0t^{x-1}(1-t)^{y-1}dt=\int^{\infty}_{0}\frac {t^{x-1}}{(1+t)^{x+y}}dt.
\end{align} 

    Bennett's result (see \cite[Proposition 2]{B} and the discussions that follow) can be stated as follows:
\begin{theorem}
\label{thm0} Let $p > 1$ be fixed. Let $\mu$ be a positive measure on $(0,1)$. Let $K=(k_{i,j})_{i \geq 1, j \geq 1}$ be given by
\begin{align*}
  k_{i,j}=\int^1_0\binom {i+j-2}{j-1}t^{i-1}(1-t)^{j}d \mu(t).
\end{align*}
  Then $\| K \|_{p,p} \leq \int^1_0t^{-1/q}(1-t)^{1/q}d \mu(t)$. 
\end{theorem}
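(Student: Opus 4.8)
The plan is to reduce the statement to the case of a single point mass and then to run the unweighted Schur test, whose two defining sums can be evaluated in closed form by the negative binomial theorem. For $t \in (0,1)$ write $K_t = (\kappa_{i,j}(t))$ with $\kappa_{i,j}(t) = \binom{i+j-2}{j-1}t^{i-1}(1-t)^j$, so that $K = \int_0^1 K_t \, d\mu(t)$ entrywise. Since all entries are non-negative and $\mu$ is a positive measure, Minkowski's integral inequality gives
$$\|K\mathbf{x}\|_p = \Big\| \int_0^1 K_t\mathbf{x}\,d\mu(t)\Big\|_p \leq \int_0^1 \|K_t\mathbf{x}\|_p\,d\mu(t) \leq \Big(\int_0^1 \|K_t\|_{p,p}\,d\mu(t)\Big)\|\mathbf{x}\|_p,$$
so it suffices to prove the pointwise bound $\|K_t\|_{p,p} \leq t^{-1/q}(1-t)^{1/q}$ for every $t \in (0,1)$. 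I would stress that this reduction is exactly what produces the sharp constant: applying Schur's test directly to $K$ would only yield the larger quantity $\big(\int_0^1 (1-t)t^{-1}\,d\mu\big)^{1/q}\big(\mu(0,1)\big)^{1/p}$, which dominates $\int_0^1 t^{-1/q}(1-t)^{1/q}\,d\mu$ by Hölder's inequality for integrals.

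For fixed $t$, since $K_t$ has non-negative entries, I would run Schur's test with trivial weights. Splitting $\kappa_{i,j} = \kappa_{i,j}^{1/q}\cdot \kappa_{i,j}^{1/p}$, the triangle inequality followed by Hölder on the inner sum gives
$$\sum_j \kappa_{i,j}(t)|x_j| \leq \Big(\sum_j \kappa_{i,j}(t)\Big)^{1/q}\Big(\sum_j \kappa_{i,j}(t)|x_j|^p\Big)^{1/p}.$$
Raising to the $p$-th power, summing over $i$, and interchanging the order of summation by Tonelli then yields
$$\|K_t\mathbf{x}\|_p^p \leq \Big(\sup_i \sum_j\kappa_{i,j}(t)\Big)^{p/q}\Big(\sup_j\sum_i \kappa_{i,j}(t)\Big)\,\|\mathbf{x}\|_p^p,$$
so the whole problem reduces to the row sum $R_i(t) = \sum_j \kappa_{i,j}(t)$ and the column sum $C_j(t) = \sum_i \kappa_{i,j}(t)$.

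The heart of the argument — and the step I expect to carry the real content — is the exact evaluation of these two sums. Using $\binom{i+j-2}{j-1} = \binom{i+j-2}{i-1}$ together with the identity $\sum_{n\geq 0}\binom{n+r-1}{n}z^n = (1-z)^{-r}$, the column sum telescopes to
$$C_j(t) = (1-t)^j\sum_{n\geq 0}\binom{n+j-1}{n}t^n = (1-t)^j(1-t)^{-j} = 1,$$
and the row sum to
$$R_i(t) = t^{i-1}(1-t)\sum_{m\geq 0}\binom{i+m-1}{m}(1-t)^m = t^{i-1}(1-t)\,t^{-i} = \frac{1-t}{t}.$$
Both are independent of the free index, so the suprema above are attained trivially and Schur's test delivers $\|K_t\|_{p,p} \leq \big((1-t)/t\big)^{1/q}\cdot 1^{1/p} = t^{-1/q}(1-t)^{1/q}$. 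Substituting this into the Minkowski bound completes the proof. The only genuine obstacle is recognizing that the unweighted Schur test already suffices — equivalently, spotting that the negative binomial identity forces the column sums to be exactly $1$ and the row sums to be exactly constant — after which no weight optimization and no estimation of series by integrals (as in the proof of \cite[Theorem 318]{HLP}) is required.
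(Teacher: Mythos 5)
Your proof is correct and complete, but note that the paper itself offers no proof of Theorem \ref{thm0} to compare against: it is imported verbatim from Bennett \cite{B} (Proposition 2 and the discussion following it), so your argument serves as a self-contained substitute for that citation. The route you take is the natural one and all the key steps check out: the negative binomial identity $\sum_{n\geq 0}\binom{n+r-1}{n}z^n=(1-z)^{-r}$ does give column sums identically $1$ and row sums identically $(1-t)/t$ for the point-mass kernel $K_t$; the unweighted Schur test then yields $\|K_t\|_{p,p}\leq t^{-1/q}(1-t)^{1/q}$ exactly; and Minkowski's integral inequality assembles these into the stated constant (legitimately, since all entries are non-negative one may replace ${\bf x}$ by $|{\bf x}|$ and invoke Tonelli to justify $(K{\bf x})_i=\int_0^1(K_t{\bf x})_i\,d\mu(t)$). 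Your side remark that the averaging must happen at the level of operator norms rather than kernels --- since Schur applied directly to $K$ loses a H\"older factor --- correctly identifies where the sharpness comes from. It is worth contrasting this with the mechanism the paper uses for its own Theorem \ref{mainthm}: there the Schur test is run with the power weights $(i/j)^{-1/q}$ as in \cite[Theorem 318]{HLP}, and the resulting row and column sums can only be \emph{estimated} (via Euler--Maclaurin, in Lemma \ref{lem1}), whereas for Bennett's kernel the unweighted test evaluates them in closed form, which is why no monotonicity hypothesis is needed. One tiny housekeeping point: if $\int_0^1 t^{-1/q}(1-t)^{1/q}\,d\mu(t)=\infty$ the statement is vacuous, so you may assume finiteness throughout; this costs nothing but should be said when invoking Minkowski.
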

  
  By taking $d \mu(t)=t^{1-\alpha}(1-t)^{-\beta}dt$ in the above theorem, we readily deduce the following
\begin{theorem}
\label{thm0'} Let $p > 1$ and $\alpha<1+1/p, \beta <1+1/q$ be fixed. Let $M(\alpha, \beta)$ be a matrix given by
\begin{align*}
  M(\alpha, \beta)_{i,j}=\binom {i+j-2}{j-1}B(i+1-\alpha, j+1-\beta), \quad i,j \geq 1.
\end{align*}
  Then $\| M(\alpha, \beta) \|_{p,p} \leq B(1+1/p-\alpha, 1+1/q-\beta)$. 
\end{theorem}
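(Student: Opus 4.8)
The plan is to obtain Theorem \ref{thm0'} as a direct specialization of Theorem \ref{thm0} to the measure $d\mu(t)=t^{1-\alpha}(1-t)^{-\beta}\,dt$ on $(0,1)$. First I would verify that this is a legitimate choice: on $(0,1)$ both $t$ and $1-t$ are positive, so the density is non-negative and $\mu$ is a positive measure, as required. Substituting into the formula for $k_{i,j}$ gives
\[
  k_{i,j}=\binom{i+j-2}{j-1}\int_0^1 t^{i-1}(1-t)^{j}\,t^{1-\alpha}(1-t)^{-\beta}\,dt
        =\binom{i+j-2}{j-1}\int_0^1 t^{i-\alpha}(1-t)^{j-\beta}\,dt.
\]
The key step here is to recognize the last integral as a beta function: by the first expression in \eqref{1.3} one has $\int_0^1 t^{i-\alpha}(1-t)^{j-\beta}\,dt=B(i+1-\alpha,\,j+1-\beta)$, so that $k_{i,j}=M(\alpha,\beta)_{i,j}$ exactly, identifying the matrix $K$ of Theorem \ref{thm0} with $M(\alpha,\beta)$.

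Second, I would compute the bound furnished by Theorem \ref{thm0} for this measure, namely
\[
  \int_0^1 t^{-1/q}(1-t)^{1/q}\,d\mu(t)=\int_0^1 t^{\,1-\alpha-1/q}(1-t)^{\,1/q-\beta}\,dt.
\]
The crucial simplification is to use $1/p+1/q=1$, i.e.\ $1-1/q=1/p$, which turns the exponent of $t$ into $1/p-\alpha$ and leaves the exponent of $1-t$ as $1/q-\beta$. Matching these against the beta integrand $t^{x-1}(1-t)^{y-1}$ of \eqref{1.3} yields $x=1+1/p-\alpha$ and $y=1+1/q-\beta$, whence the bound equals $B(1+1/p-\alpha,\,1+1/q-\beta)$, which is precisely the asserted constant.

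Finally I would check the admissibility conditions hidden in the two beta evaluations, since this is where the hypotheses $\alpha<1+1/p$ and $\beta<1+1/q$ enter. The bounding integral converges if and only if its exponents exceed $-1$, i.e.\ $1/p-\alpha>-1$ and $1/q-\beta>-1$, which are exactly $\alpha<1+1/p$ and $\beta<1+1/q$. Under these same constraints each $k_{i,j}$ is finite, because for $i,j\ge 1$ one has $i+1-\alpha>1/q>0$ and $j+1-\beta>1/p>0$, so every beta function appearing is well defined. I do not expect a genuine obstacle: the argument is pure substitution followed by beta-function bookkeeping, and the only point demanding care is the exponent arithmetic via $1/p+1/q=1$, together with confirming that the stated ranges of $\alpha,\beta$ are precisely those making both beta evaluations legitimate.
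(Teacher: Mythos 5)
Your proposal is correct and is exactly the paper's argument: the author deduces Theorem \ref{thm0'} from Theorem \ref{thm0} by the same choice $d\mu(t)=t^{1-\alpha}(1-t)^{-\beta}\,dt$, with the identification $k_{i,j}=M(\alpha,\beta)_{i,j}$ and the bound $B(1+1/p-\alpha,1+1/q-\beta)$ following from the beta-function identity \eqref{1.3} and $1-1/q=1/p$. Your additional check that the hypotheses $\alpha<1+1/p$, $\beta<1+1/q$ are precisely the convergence conditions is a worthwhile detail the paper leaves implicit.
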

  
   Note that the case $\alpha=\beta=1$ in the above theorem corresponds to Hilbert's inequality (a stronger form with the corresponding matrix being $(1/(i+j-1))_{i \geq 1, j \geq 1}$). The case $\alpha=1, \beta=0$ in the above theorem corresponds to the matrix studied explicitly by Bennett in \cite[Proposition 2]{B}. Note that $H(0,1)_{i,j}\leq M(1,0)_{i,j}$ when $i, j \geq 1$. Similarly, it is easy to check that $H(1,1)_{i,j} \leq M(0,0)_{i,j}$ when $i, j \geq 1$.  It follows from Theorem \ref{thm0'} that $\| H(0,1) \|_{p, p}  \leq \| M(1,0) \|_{p,p} \leq B(1/p, 1+1/q), \| H(1,1) \|_{p,p} \leq \| M(0,0) \|_{p,p} \leq B(1+1/p, 1+1/q)$.  On the other hand,  Lemma \ref{lem40} in Section \ref{sec 4} implies that $\| H(0,1) \|_{p, p}  \geq B(1/p, 1+1/q), \| H(1,1) \|_{p,p} \geq B(1+1/p, 1+1/q)$.  We therefore deduce the following
 \begin{theorem}
\label{thm0''} Let $p > 1$ be fixed and let $H(0,1)=(j/(i+j)^2)_{i \geq 1, j \geq 1}$, $H(1,1)=(ij/(i+j)^3)_{i \geq 1, j \geq 1}$. Then $\| H(0,1) \|_{p,p}=\pi/(q\cdot \sin(\pi/p)), \| H(1,1) \|_{p,p}=\pi/(2p q\cdot \sin(\pi/p))$. 
\end{theorem}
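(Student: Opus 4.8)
The plan is to sandwich each of the two operator norms between matching upper and lower bounds, both expressed in terms of beta functions, and then to evaluate those beta functions in closed form. All the analytic work is in fact already available: the upper bounds come from Theorem \ref{thm0'} together with the entrywise domination inequalities stated just before the theorem, and the lower bounds come from Lemma \ref{lem40}. So the proof is essentially a synthesis followed by a gamma-function computation.

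First I would record the upper bounds. Since $H(0,1)$, $M(1,0)$, $H(1,1)$, and $M(0,0)$ all have nonnegative entries, entrywise domination of nonnegative matrices implies domination of their $l^p$ operator norms: if $0\le a_{i,j}\le b_{i,j}$ then for any $\mathbf{x}\in l^p$ one has $|(A\mathbf{x})_i|\le (B|\mathbf{x}|)_i$, so $\|A\mathbf{x}\|_p\le \|B|\mathbf{x}|\|_p$ and hence $\|A\|_{p,p}\le\|B\|_{p,p}$. Applying this to $H(0,1)_{i,j}\le M(1,0)_{i,j}$ and $H(1,1)_{i,j}\le M(0,0)_{i,j}$, and invoking Theorem \ref{thm0'} with $(\alpha,\beta)=(1,0)$ and $(\alpha,\beta)=(0,0)$ respectively, gives
\[
   \|H(0,1)\|_{p,p}\le B(1/p,\,1+1/q),\qquad \|H(1,1)\|_{p,p}\le B(1+1/p,\,1+1/q).
\]
Next I would invoke Lemma \ref{lem40}, which supplies the reverse inequalities $\|H(0,1)\|_{p,p}\ge B(1/p,1+1/q)$ and $\|H(1,1)\|_{p,p}\ge B(1+1/p,1+1/q)$. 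Combining the two halves forces equality, so that $\|H(0,1)\|_{p,p}=B(1/p,1+1/q)$ and $\|H(1,1)\|_{p,p}=B(1+1/p,1+1/q)$.

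It then remains to evaluate these beta functions. Writing $B(x,y)=\Gamma(x)\Gamma(y)/\Gamma(x+y)$ and using $1/p+1/q=1$, the denominators collapse to $\Gamma(2)=1$ and $\Gamma(3)=2$. Pulling out factors via $\Gamma(1+s)=s\Gamma(s)$ gives $\Gamma(1+1/q)=(1/q)\Gamma(1/q)$ and $\Gamma(1+1/p)=(1/p)\Gamma(1/p)$, and the reflection formula yields $\Gamma(1/p)\Gamma(1/q)=\Gamma(1/p)\Gamma(1-1/p)=\pi/\sin(\pi/p)$. Hence
\[
   B(1/p,\,1+1/q)=\frac{1}{q}\,\Gamma(1/p)\Gamma(1/q)=\frac{\pi}{q\sin(\pi/p)},\qquad
   B(1+1/p,\,1+1/q)=\frac{1}{2pq}\,\Gamma(1/p)\Gamma(1/q)=\frac{\pi}{2pq\sin(\pi/p)},
\]
which are precisely the claimed values.

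Since both the upper and the lower bounds are already assembled in the discussion preceding the statement, there is no genuine obstacle at this stage; the only points requiring care are the elementary monotonicity principle relating entrywise domination to operator-norm domination for nonnegative matrices, and the bookkeeping in the gamma-function evaluation. The real content of the result lies upstream, in the deduction of Theorem \ref{thm0'} from Bennett's Theorem \ref{thm0} (for the upper bounds) and in Lemma \ref{lem40} (for the matching lower bounds), rather than in this final combination.
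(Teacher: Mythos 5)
Your proposal is correct and follows essentially the same route as the paper, which establishes Theorem \ref{thm0''} in the discussion preceding its statement by combining the entrywise dominations $H(0,1)_{i,j}\leq M(1,0)_{i,j}$ and $H(1,1)_{i,j}\leq M(0,0)_{i,j}$ with Theorem \ref{thm0'} for the upper bounds and Lemma \ref{lem40} for the lower bounds. Your explicit gamma-function evaluation of the beta values is also correct.
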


  To establish results analogue to that given in Theorem \ref{thm0''}, one hopes to have that for $\alpha<1+1/p, \beta <1+1/q$, $i,j \geq 1$,
\begin{align*}
  H(1-\alpha, 1-\beta)_{i,j} \leq M(\alpha, \beta)_{i,j}, 
\end{align*}
   where $M(\alpha, \beta)$ is defined as in Theorem \ref{thm0'}. However, the above inequality does not always hold. For example, one checks directly that when $\alpha=\beta=-1, i=j$, the above inequality fails to hold when $i \rightarrow \infty$. Similarly, with the help of Stirling's approximation, one can show that the above inequality fails to hold when $\alpha=\beta=-1/2, i=j$, $i \rightarrow \infty$. 

   In this paper, we use the approach in the proof of \cite[Theorem 318]{HLP} to study $\| H(\alpha, \beta) \|_{p,p}$. It follows from \cite[Theorem 286]{HLP} and the approach on \cite[p. 229]{HLP} that for any matrix $K=(K(i,j))_{i \geq 1, j\geq 1}$ with $K(x,y)$ a non-negative, homogeneous function of degree $-1$, we have $\| K \|_{p,p} \leq k$ provided that
\begin{align*}
  \sum^{\infty}_{i=1}K(i,j)\left ( \frac {i}{j} \right )^{-1/q} \leq k, \quad \sum^{\infty}_{j=1}K(i,j)\left ( \frac {j}{i} \right )^{-1/p} \leq k.
\end{align*}

   Apply the above argument to $K(x,y)=x^{\alpha}y^{\beta}/(x+y)^{\alpha+\beta+1}$, we see that (with the help of Lemma \ref{lem40}) $\| H(\alpha, \beta) \|_{p,p}=B(\alpha+1/p, \beta+1/q)$ for any $\alpha>-1/p, \beta>-1/q$ provided that we can show for any $\lambda>-1, s>\lambda+1$:
\begin{align}
\label{1.4}
   \sum^{\infty}_{m=1}\frac {m^{\lambda}}{(m+n)^{s}} \leq B(\lambda+1, s-\lambda-1)n^{1+\lambda-s}.
\end{align}
    
   We note that the above inequality is valid when $\lambda<0$ by the integral test. In \cite[Lemma 2]{K&P}, it is shown that the above inequality is valid when $0<s \leq 2, -1<\lambda < s-1$ and $2<s \leq 14, -1<\lambda \leq 1$. 
In the next section, we extend this result to the case of $1< \lambda \leq 2, \lambda+1<s \leq 5$ to prove the following 
\begin{theorem}
\label{mainthm}
  Let $p >1$ be fixed and let $H(\alpha, \beta)=(i^{\alpha}j^{\beta}/(i+j)^{\alpha+\beta+1})_{i\geq 1,j \geq 1}$ , then $\| H(\alpha, \beta) \|_{p,p}=B(\alpha+1/p, \beta+1/q)$ when $ -1/p<\alpha \leq 2,   -1/q<\beta \leq 2$.
\end{theorem}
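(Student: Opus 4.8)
The plan is to push the entire statement onto the single inequality \eqref{1.4}. As recalled in the Introduction, \cite[Theorem 286]{HLP} together with the argument on \cite[p.~229]{HLP} gives $\|H(\alpha,\beta)\|_{p,p}\le B(\alpha+1/p,\beta+1/q)$ as soon as the two weighted row and column sums of $K(x,y)=x^{\alpha}y^{\beta}/(x+y)^{\alpha+\beta+1}$ are bounded by $B(\alpha+1/p,\beta+1/q)$. Substituting the weights $(i/j)^{-1/q}$ and $(j/i)^{-1/p}$ and collecting the powers of the free index, these two conditions are seen to be exactly \eqref{1.4} with $(\lambda,s)=(\alpha-1/q,\alpha+\beta+1)$ and $(\lambda,s)=(\beta-1/p,\alpha+\beta+1)$ respectively. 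For $-1/p<\alpha\le2$ and $-1/q<\beta\le2$ these parameters satisfy $-1<\lambda<2$ and $\lambda+1<s\le5$, so both conditions follow once \eqref{1.4} is known on that range. The case $\lambda<0$ is the integral test and the range $0<\lambda\le1$ is \cite[Lemma 2]{K&P}, so the whole theorem comes down to proving \eqref{1.4} for $1<\lambda\le2$, $\lambda+1<s\le5$; the matching lower bound then comes from Lemma \ref{lem40}, giving equality.

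Next I would recast \eqref{1.4} in integral form. Writing $f(t)=t^{\lambda}/(t+n)^{s}$ and substituting $t=nu$ in \eqref{1.3} yields $\int_0^\infty f(t)\,dt=B(\lambda+1,s-\lambda-1)n^{1+\lambda-s}$, so \eqref{1.4} is precisely $\sum_{m=1}^{\infty}f(m)\le\int_0^{\infty}f(t)\,dt$. Because $\lambda>0$ the summand is not monotone, which is exactly why the decreasing assumption fails, and the integral test must be replaced by a comparison sensitive to the shape of $f$. A computation gives $f'(t)=t^{\lambda-1}(t+n)^{-s-1}\bigl((\lambda-s)t+\lambda n\bigr)$, so $f$ increases on $(0,t^{*})$ and decreases on $(t^{*},\infty)$ with $t^{*}=\lambda n/(s-\lambda)$, while
\[
   f''(t)=t^{\lambda-2}(t+n)^{-s-2}Q(t),\qquad Q(t)=(s-\lambda)(s-\lambda+1)t^{2}-2\lambda(s-\lambda+1)nt+\lambda(\lambda-1)n^{2}.
\]
Since $\lambda>1$, the upward parabola $Q$ has two positive roots $t_1<t_2$ (symmetric about $t^{*}$), so $f$ is convex on $(0,t_1)$, concave on $(t_1,t_2)$, and convex again on $(t_2,\infty)$.

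With this structure in hand I would use the midpoint (Hermite--Hadamard) comparison. Splitting $\int_0^\infty f=\int_0^{1/2}f+\sum_{m\ge1}\int_{m-1/2}^{m+1/2}f$ and setting $D_m=\int_{m-1/2}^{m+1/2}f(t)\,dt-f(m)$, one has the exact Peano--kernel identity $D_m=\int_{-1/2}^{1/2}\tfrac12(\tfrac12-|v|)^2 f''(m+v)\,dv$, whence
\[
   \int_0^{\infty}f(t)\,dt-\sum_{m=1}^{\infty}f(m)=\int_0^{1/2}f(t)\,dt+\int_{1/2}^{\infty}\tilde k(t)\,f''(t)\,dt,
\]
where $\tilde k\ge0$ is the $1$-periodic glued kernel with $\tilde k\le\tfrac18$. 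As the kernel is nonnegative, the only negative contribution comes from the concave band $(t_1,t_2)$, and it is bounded below by $-\tfrac18\bigl(f'(t_1)-f'(t_2)\bigr)$.

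The heart of the matter — and the step I expect to be the genuine obstacle — is the balancing of this deficit, which must be done \emph{uniformly in $n$} and is sharp: already at $\lambda=2$, $s=4$, $n=1$ the two sides of \eqref{1.4} differ by under one percent. A crude estimate fails decisively, since as $n\to\infty$ the deficit $\tfrac18\bigl(f'(t_1)-f'(t_2)\bigr)$ is of order $n^{\lambda-s-1}$ whereas $\int_0^{1/2}f$ is only of order $n^{-s}$; thus the convex surplus buried in $\int_{1/2}^{\infty}\tilde k f''$ cannot be discarded, and one is forced to retain the oscillation of $\tilde k$ (equivalently, to carry the Euler--Maclaurin expansion past its average, producing a further $f'''$ term whose leading behaviour cancels the apparent deficit). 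After this refinement the problem reduces to an explicit elementary inequality in the parameters $\lambda$ and $s$, which I expect to hold precisely on a region containing $1<\lambda\le2$, $\lambda+1<s\le5$ — the bound $s\le5$ entering exactly as the threshold beyond which the elementary estimate degrades, in direct analogy with the threshold $s\le14$ obtained for $\lambda\le1$ in \cite[Lemma 2]{K&P}. Combining the resulting upper bound with the lower bound of Lemma \ref{lem40} yields the stated equality.
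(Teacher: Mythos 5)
Your reduction of the theorem to inequality \eqref{1.4} is correct and is exactly the paper's strategy: the Schur-test argument with weights $(i/j)^{-1/q}$, $(j/i)^{-1/p}$ gives the upper bound once \eqref{1.4} holds, the parameter correspondence $(\lambda,s)=(\alpha-1/q,\alpha+\beta+1)$ and $(\beta-1/p,\alpha+\beta+1)$ is right, the cases $\lambda\le 0$ and $0<\lambda\le 1$ are correctly delegated to the integral test and to \cite[Lemma 2]{K\&P}, and the matching lower bound is Lemma \ref{lem40}. The problem is that the remaining case $1<\lambda\le 2$, $\lambda+1<s\le 5$ --- which is the entire new content of the theorem and is proved in the paper as Lemma \ref{lem1} --- is not actually established in your proposal. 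You set up a midpoint-rule comparison with a nonnegative Peano kernel, correctly observe that it cannot work as stated (the concave-band deficit is of order $n^{\lambda-s-1}$, which for $\lambda>1$ dominates the $n^{-s}$ surplus from $\int_0^{1/2}f$), and then replace the decisive step by the sentence that a further refinement ``produces an explicit elementary inequality \dots which I expect to hold.'' That expectation is the theorem; nothing in the proposal verifies it, and nothing in the proposal explains where the bound $s\le 5$ actually comes from beyond an analogy with \cite[Lemma 2]{K\&P}.

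For comparison, the paper carries out precisely the refinement you gesture at, but concretely: it applies the Euler--Maclaurin formula \eqref{2.01} through the $B_2(\{t\})f''$ remainder, splits $f''_{s,\lambda,n}=g_{s,\lambda,n}+h_{s,\lambda,n}$ so that the sign hypotheses of \cite[Proposition 9.2.3]{Cohen} ($g^{(4)},g^{(6)}<0$, etc.) can be checked on each piece, and thereby bounds the remainder by explicit values of $f'$, $f^{(3)}$ and $g^{(3)}$ at $t=1$. This converts \eqref{1.4} into the explicit inequality $D_{s,\lambda}(n)\ge 0$ of \eqref{2.02}, which is then expanded as $\sum_{i=0}^{5}D_i(s,\lambda)(n+1)^{-s-i}$ and verified coefficient by coefficient; the constraints $6\cdot 42\ge 6s^2+14$, $126\ge 4s^2-2s+18$ and $19\cdot 42\ge 6s^2$ appearing in that verification are exactly what forces $s\le 5$. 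Until you produce and verify the analogous explicit inequality for your kernel decomposition, your argument has a genuine gap at its central step.
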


    We note here that \cite[Theorem 286]{HLP} again implies that Theorem \ref{mainthm} is equivalent to the statement that with the best possible constant $C(\alpha,\beta, p)=B(\alpha+1/p, \beta+1/q)$, for ${\bf x} \in l^q, {\bf y} \in l^p$:
\begin{align}
\label{1.2}
   \big | \sum^{\infty}_{i,j=1}\frac {i^{\alpha}j^{\beta}}{(i+j)^{\alpha+\beta+1}}x_iy_j \big | \leq C(\alpha, \beta, p)\| {\bf x}\|_q \| {\bf y}\|_p. 
\end{align}

   The proof for Theorem \ref{mainthm} relies on the estimation of the series given in \eqref{1.4}. When $-1 < \lambda \leq 0, s> \lambda+1$, this series is studied by Bennett and Jameson in \cite[Proposition 14]{B&J}. In the same paper, they raised an open question (see the remark below \cite[Proposition 13]{B&J}) on whether the following sequence is increasing with $n$ when $\alpha >1$:
\begin{align}
\label{1}
  \frac 1{n^{\alpha+2}}\sum^{n-1}_{r=1}r^\alpha(n-r).
\end{align}

   In this paper, we give an affirmative answer to the above open question of Bennett and Jameson. We prove in Section \ref{sec 3} the following:
\begin{theorem}
\label{thm1} Let $\alpha >1$ be fixed, the sequence defined in \eqref{1} is increasing with $n$.
\end{theorem}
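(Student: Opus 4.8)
The plan is to work with the numerator $S_n=\sum_{r=1}^{n-1}r^\alpha(n-r)$, so that the sequence in \eqref{1} is $a_n=S_n/n^{\alpha+2}$, and to set $P_k=\sum_{r=1}^{k}r^\alpha$. First I would record two elementary identities: splitting $n+1-r=(n-r)+1$ gives $S_{n+1}=S_n+P_n$ (the $(n-r)$-part reproduces $S_n$ since its $r=n$ term vanishes, and the $1$-part is $P_n$), and iterating this yields $S_n=\sum_{k=1}^{n-1}P_k$. Writing $\delta_k=(k+1)^{\alpha+2}-k^{\alpha+2}$, the claim $a_{n+1}>a_n$ becomes, after clearing the positive denominators,
\[
D_n:=n^{\alpha+2}P_n-\delta_n S_n>0,\qquad n\ge 1,
\]
so the whole theorem reduces to proving this one inequality.

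Next I would exploit the telescoping identity $n^{\alpha+2}=1+\sum_{k=1}^{n-1}\delta_k$ together with $S_n=\sum_{k=1}^{n-1}P_k$ to rewrite
\[
D_n=P_n+\sum_{k=1}^{n-1}\bigl(\delta_k P_n-\delta_n P_k\bigr)=P_n+\sum_{k=1}^{n-1}P_kP_n\Bigl(\frac{\delta_k}{P_k}-\frac{\delta_n}{P_n}\Bigr).
\]
This exhibits the natural sufficient condition: if $g(k):=\delta_k/P_k$ is decreasing in $k$, then for every $k<n$ one has $g(k)\ge g(n)$, each summand is nonnegative, and hence $D_n\ge P_n>0$. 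Thus the entire statement collapses to the single monotonicity assertion that $g$ is decreasing.

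It then remains to prove $g(k+1)\le g(k)$, i.e. $\delta_{k+1}P_k\le\delta_k P_{k+1}$. Using $P_{k+1}=P_k+(k+1)^\alpha$ this is equivalent to the clean inequality
\[
\bigl(\delta_{k+1}-\delta_k\bigr)P_k\le\delta_k\,(k+1)^\alpha,
\]
where $\delta_{k+1}-\delta_k=(k+2)^{\alpha+2}-2(k+1)^{\alpha+2}+k^{\alpha+2}$ is the second forward difference of $x^{\alpha+2}$. My approach here is to compare all three factors with integrals: since $\alpha>1$ forces $x^\alpha$ to be convex, the midpoint estimate gives the upper bound $P_k\le\int_{1/2}^{k+1/2}x^\alpha\,dx<(k+\tfrac12)^{\alpha+1}/(\alpha+1)$, while $\delta_k=\int_k^{k+1}(\alpha+2)x^{\alpha+1}\,dx$ and $\delta_{k+1}-\delta_k=\int_0^1\!\int_0^1(\alpha+1)(\alpha+2)(k+s+t)^\alpha\,ds\,dt$ are treated with their centering at $k+1$ exploited throughout.

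The hard part is precisely this last inequality. A direct asymptotic expansion shows that its two sides agree not only to leading order $k^{2\alpha+1}$ but also at the next order $k^{2\alpha}$ — indeed the continuous analogue of $g$ is the exact constant $(\alpha+1)(\alpha+2)$ — so any crude integral bound, which already loses at second order, is hopeless. One must therefore carry the estimates for $P_k$, for $\delta_k$, and for the second difference one order further and use the strict convexity of $x^\alpha$ (this is where $\alpha>1$ is decisive) to fix the sign of the surviving $O(k^{2\alpha-1})$ remainder. I would complement the resulting large-$k$ argument by checking the first few values of $k$ directly, since the asymptotic estimate only controls $k$ beyond an explicit threshold.
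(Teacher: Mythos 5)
Your reduction is sound and in fact lands on exactly the same key inequality as the paper. Writing $P_k=\sum_{r=1}^k r^\alpha$ and $\delta_k=(k+1)^{\alpha+2}-k^{\alpha+2}$, your identities $S_{n+1}=S_n+P_n$, $S_n=\sum_{k=1}^{n-1}P_k$ and the telescoping of $n^{\alpha+2}$ are all correct, and your conclusion that everything follows once $\delta_k/P_k$ is shown to be decreasing is precisely the paper's inequality \eqref{10}, namely $P_n/P_{n+1}\le \delta_n/\delta_{n+1}$ (the paper reaches it by induction rather than by your summation-by-parts rearrangement, but the two routes are equivalent).

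The genuine gap is that you do not actually prove this monotonicity, and your proposed method for it is unlikely to close. As you yourself observe, the two sides of $(\delta_{k+1}-\delta_k)P_k\le \delta_k(k+1)^\alpha$ agree through the first two orders of the asymptotic expansion in $k$, so the sign is decided at order $k^{2\alpha-1}$; carrying Euler--Maclaurin-type bounds for $P_k$ and the integral representations of $\delta_k$, $\delta_{k+1}-\delta_k$ to that depth with controlled error is exactly the hard analysis you defer, and the fallback of ``checking the first few values of $k$'' is not a finite verification, since each fixed $k$ still gives an inequality in the continuous parameter $\alpha>1$, and the threshold beyond which your asymptotics take over would have to be uniform in $\alpha$ (including as $\alpha\to 1^+$ and $\alpha\to\infty$). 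The paper sidesteps all of this with a different idea: by the Cauchy-type ratio lemma of Xu (Lemma \ref{lem3}), the ratio inequality $P_n/P_{n+1}\le \delta_n/\delta_{n+1}$ follows from the corresponding inequality for increments, $(n+1)^\alpha/(n+2)^\alpha\le(\delta_{n+1}-\delta_n)/(\delta_{n+2}-\delta_{n+1})$, which no longer involves the sum $P_k$ at all; this is then recast as the monotonicity of $f(x)=x^{-2}\bigl((1+x)^{\alpha+2}+(1-x)^{\alpha+2}-2\bigr)$ and settled by the Hermite--Hadamard inequality applied to a convex function. You need either this lemma (or an equivalent device that eliminates the sum $P_k$ from the key inequality) or a fully executed and $\alpha$-uniform version of your asymptotic argument; as written, the central step of the proof is missing.
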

%%----------------------------------------------------------------------------
\section{Proof of Theorem \ref{mainthm}}
\label{sec 4} \setcounter{equation}{0}
%%----------------------------------------------------------------------------
   Note that when $\alpha \leq 2, \beta \leq 2$, then $\alpha+\beta+1\leq 5$. Thus, by our discussion in Section \ref{sec1}, we see that Theorem \ref{mainthm} follows from a combination of \cite[Lemma 2]{K&P} and the following two lemmas:
\begin{lemma}
\label{lem40}
   Let $p>1, \alpha > -1/p,  \beta >-1/q$ be fixed, let $H(\alpha, \beta)=(i^{\alpha}j^{\beta}/(i+j)^{\alpha+\beta+1})_{i \geq 1, j \geq 1}$, then $\| H(\alpha, \beta) \|_{p,p} \geq B(\alpha+1/p,\beta+1/q)$.
\end{lemma}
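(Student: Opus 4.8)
The plan is to bound $\|H(\alpha,\beta)\|_{p,p}$ from below by testing the associated bilinear form \eqref{1.2} against an explicit family of power sequences and then letting a regularizing parameter tend to $0$, exactly as in the sharpness argument for Hilbert's inequality in \cite[Theorem 317]{HLP}. Since every entry of $H(\alpha,\beta)$ is positive, H\"older's inequality gives, for any nonnegative ${\bf x}\in l^q$ and ${\bf y}\in l^p$,
$$\|H(\alpha,\beta)\|_{p,p}\ \ge\ \frac{\sum_{i,j\ge1}H(\alpha,\beta)_{i,j}\,x_iy_j}{\|{\bf x}\|_q\,\|{\bf y}\|_p},$$
which is one direction of the equivalence in \cite[Theorem 286]{HLP}. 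For $\epsilon>0$ I would take the \emph{matched} test sequences $x_i=i^{-(1+\epsilon)/q}$ and $y_j=j^{-(1+\epsilon)/p}$. The precise exponents matter: they are calibrated so that ${\bf x}$ is asymptotically the optimal dual vector for ${\bf y}$, and this is what makes the constant come out sharp rather than off by a multiplicative factor (the naive choice $x_i=i^{-1/q-\epsilon}$, $y_j=j^{-1/p-\epsilon}$ with a common shift does \emph{not} recover $B(\alpha+1/p,\beta+1/q)$).

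With these sequences $\|{\bf x}\|_q^q=\sum_{i\ge1}i^{-(1+\epsilon)}$ and $\|{\bf y}\|_p^p=\sum_{j\ge1}j^{-(1+\epsilon)}$, so that $\epsilon\|{\bf x}\|_q^q\to1$ and $\epsilon\|{\bf y}\|_p^p\to1$ as $\epsilon\to0^+$, whence $\|{\bf x}\|_q\|{\bf y}\|_p\sim\epsilon^{-1}$. To evaluate the numerator $S=\sum_{i,j}H(\alpha,\beta)_{i,j}x_iy_j$ I would slice by the value $s=i+j$. Writing $a=\alpha-(1+\epsilon)/q$ and $b=\beta-(1+\epsilon)/p$, homogeneity of the kernel gives $S=\sum_{s\ge2}s^{-(\alpha+\beta+1)}\sum_{r=1}^{s-1}r^{a}(s-r)^{b}$. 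The inner sum is a Riemann sum: the substitution $r=sx$ shows $s^{-(a+b+1)}\sum_{r=1}^{s-1}r^{a}(s-r)^{b}\to\int_0^1x^{a}(1-x)^{b}\,dx=B(a+1,b+1)$ as $s\to\infty$, by \eqref{1.3}. Since $a+b+1=\alpha+\beta-\epsilon$, each sliced term is asymptotic to $B(a+1,b+1)\,s^{-1-\epsilon}$, and summing over $s$ yields $S\sim B(a+1,b+1)/\epsilon$. As $\epsilon\to0^+$ one has $a+1\to\alpha+1/p$ and $b+1\to\beta+1/q$, so $B(a+1,b+1)\to B(\alpha+1/p,\beta+1/q)$; combined with $\|{\bf x}\|_q\|{\bf y}\|_p\sim\epsilon^{-1}$ this gives $S/(\|{\bf x}\|_q\|{\bf y}\|_p)\to B(\alpha+1/p,\beta+1/q)$, and the lemma follows.

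The hard part is making the limits $s\to\infty$ and $\epsilon\to0^+$ cooperate rigorously. Because $\alpha>-1/p$ and $\beta>-1/q$, for all small $\epsilon$ we have $a+1>0$ and $b+1>0$, so the limiting integrand $x^{a}(1-x)^{b}$ is integrable on $(0,1)$; this is exactly where the hypotheses are used. However, when $\alpha$ or $\beta$ is large the summand $r^{a}(s-r)^{b}$ is not monotone in $r$ and the integrand is singular at the endpoints, so the Riemann sums converge improperly and must be controlled uniformly in $\epsilon$. Since only a lower bound is needed, I would argue it suffices to prove $\liminf_{\epsilon\to0^+}\epsilon S\ge B(\alpha+1/p,\beta+1/q)$, and for a lower bound one may restrict the inner sum to a fixed central portion $\delta s\le r\le(1-\delta)s$, where the summand is bounded and the Riemann sums converge uniformly; letting $\delta\to0$ recovers $B(a+1,b+1)$ from below by monotone convergence, since $\int_\delta^{1-\delta}x^{a}(1-x)^{b}\,dx\uparrow B(a+1,b+1)$. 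The contributions from finitely many small $s$ are $O(1)$ and hence negligible against the $\epsilon^{-1}$ growth, so only the clean large-$s$ asymptotics survive.
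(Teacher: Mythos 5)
Your proof is correct, but it takes a genuinely different route from the paper's. The paper also reduces to bounding the bilinear form \eqref{1.2} from below via \cite[Theorem 286]{HLP}, but it tests against the \emph{truncated} sequences $x_i=i^{-1/q}$, $y_j=j^{-1/p}$ for $1\le i,j\le N$ (zero otherwise), so that $\|{\bf x}\|_q\|{\bf y}\|_p=\sum_{i=1}^N 1/i$; it then bounds the inner sum $\sum_j j^{\beta-1/p}(i+j)^{-(\alpha+\beta+1)}$ below by the Beta integral $B(\alpha+1/p,\beta+1/q)\,i^{-(\alpha+1/p)}$ minus an explicit correction $C i^{-(\alpha+1+1/p)}$, using that $x\mapsto x^{\beta-1/p}/(x+i)^{\alpha+\beta+1}$ is unimodal so the sum-versus-integral comparison loses at most one term. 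The correction contributes $O(\sum_i i^{-2})=O(1)$ against the divergent harmonic sum, and letting $N\to\infty$ finishes. You instead use the $\epsilon$-regularized powers $i^{-(1+\epsilon)/q}$, $j^{-(1+\epsilon)/p}$, slice along $i+j=s$, and recognize the inner sums as Riemann sums for the Beta integral. Both are standard sharpness arguments in the spirit of \cite{HLP}. The paper's version is the more economical one: the unimodality trick yields an explicit, summable error with no interchange of limits. Your version must reconcile the three limits $s\to\infty$, $\epsilon\to0^+$, $\delta\to0$ because $x^{a}(1-x)^{b}$ is singular at the endpoints, but you handle this correctly by restricting to the central range $\delta s\le r\le(1-\delta)s$ for the lower bound and invoking monotone convergence; your remark that the exponents must be calibrated to $q$ and $p$ separately (a common shift $\epsilon$ would degrade the constant by a factor $p^{1/p}q^{1/q}/2<1$ unless $p=2$) is also accurate and matches the calibration implicit in the paper's choice $x_i=i^{-1/q}$, $y_j=j^{-1/p}$.
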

\begin{proof}
   By \cite[Theorem 286]{HLP}, it suffices to show that if there exists a constant $C(\alpha, \beta, p)$ such that inequality \eqref{1.2} holds for any ${\bf x} \in l^q, {\bf y} \in l^p$, then $C(\alpha, \beta, p) \geq B(\alpha+1/p, \beta+1/q)$. We follow the method given on \cite[p. 233]{HLP} by setting $x_i=i^{-1/q}, y_j=j^{-1/p}$ when $1 \leq i, j \leq N$ and $x_i=y_j=0$ otherwise to see that
\begin{align*}
   \sum^{\infty}_{i,j=1}\frac {i^{\alpha}j^{\beta}}{(i+j)^{\alpha+\beta+1}}x_iy_j=\sum^{N}_{i=1}i^{\alpha-1/q}\sum^{N}_{j=1}\frac {j^{\beta-1/p}}{(i+j)^{\alpha+\beta+1}}, \quad \| {\bf x}\|_q \| {\bf y}\|_p=\sum^{N}_{i=1}\frac 1{i}.
\end{align*}
   For any given $\epsilon>0$, we choose $N$ large enough such that (note that our assumption on $\alpha$ and $\beta$  ensures that the infinite series in the following expression converges)
\begin{align*}
   \sum^{N}_{j=1}\frac {j^{\beta-1/p}}{(i+j)^{\alpha+\beta+1}}>(1-\epsilon)\sum^{\infty}_{j=1}\frac {j^{\beta-1/p}}{(i+j)^{\alpha+\beta+1}}.
\end{align*}
  Note that the function $x^{\beta-1/p}/(x+i)^{\alpha+\beta+1}$ is increasing when $x < (\beta-1/p)i/(\alpha+1+1/p)$ and decreasing when $x>(\beta-1/p)i/(\alpha+1+1/p)$. It follows that 
\begin{align*}
 & \frac {j^{\beta-1/p}}{(i+j)^{\alpha+\beta+1}}\geq \int^{j}_{j-1}\frac {x^{\beta-1/p}}{(i+x)^{\alpha+\beta+1}}dx, \quad j \leq (\beta-1/p)i/(\alpha+1+1/p),\\
 & \frac {j^{\beta-1/p}}{(i+j)^{\alpha+\beta+1}} \geq \int^{j+1}_{j}\frac {x^{\beta-1/p}}{(i+x)^{\alpha+\beta+1}}dx, \quad j > (\beta-1/p)i/(\alpha+1+1/p).
\end{align*}
  We then deduce easily that
\begin{align*}
   \sum^{\infty}_{j=1}\frac {j^{\beta-1/p}}{(i+j)^{\alpha+\beta+1}}\geq \int^{\infty}_{0}\frac {x^{\beta-1/p}}{(i+x)^{\alpha+\beta+1}}dx-\frac {C}{i^{\alpha+1+1/p}}=B(\alpha+\frac 1{p}, \beta+\frac 1{q})i^{-(\alpha+1/p)}-\frac {C}{i^{\alpha+1+1/p}},
\end{align*}
  where $C$ is a constant depending on $\alpha, \beta, p$ and we used \eqref{1.3} to evaluate the integration above.

  As $\sum^{\infty}_{i=1}i^{-2}<\infty$, we deduce that
\begin{align*}
&  \sum^{N}_{i=1}i^{\alpha-1/q}\sum^{N}_{j=1}\frac {j^{\beta-1/p}}{(i+j)^{\alpha+\beta+1}} \geq B(\alpha+\frac 1{p}, \beta+\frac 1{q})(1-\epsilon)\sum^{N}_{i=1}\frac 1i+C' \\
& =B(\alpha+\frac 1{p}, \beta+\frac 1{q})(1-\epsilon)\| {\bf x}\|_q \| {\bf y}\|_p+(1-\epsilon)C',
\end{align*}
  where $C'$ is a constant depending on $\alpha, \beta, p$. By letting $N \rightarrow \infty$, we see that the constant $C(\alpha, \beta, p)$ in \eqref{1.2} satisfies $C(\alpha, \beta, p) \geq B(\alpha+1/p, \beta+1/q)$ and this completes the proof.
\end{proof}
%%-------------------------------------------------------------------
\begin{lemma}
\label{lem1}
   Let $1< \lambda \leq 2$, then inequality \eqref{1.4} holds for $\lambda+1<s \leq 5$.
\end{lemma}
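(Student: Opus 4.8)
The plan is to recast \eqref{1.4} as a comparison between a sum and its associated integral. Using the second integral representation of the beta function in \eqref{1.3} together with the substitution $x=nt$, one sees that the right-hand side of \eqref{1.4} is exactly the integral of the summand, namely
\begin{align*}
  \int^{\infty}_{0}\frac{x^{\lambda}}{(x+n)^{s}}\,dx=B(\lambda+1,s-\lambda-1)\,n^{1+\lambda-s},
\end{align*}
where convergence is guaranteed by $\lambda>-1$ and $s>\lambda+1$. Hence, writing $f(x)=x^{\lambda}/(x+n)^{s}$, Lemma \ref{lem1} is equivalent to the statement $\sum^{\infty}_{m=1}f(m)\le\int^{\infty}_{0}f(x)\,dx$ for every $n\ge1$.

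First I would record the shape of $f$. Since $f'(x)=x^{\lambda-1}(x+n)^{-s-1}\left(\lambda n-(s-\lambda)x\right)$, the function vanishes at $0$, rises to a single interior maximum at $x^{\ast}=\lambda n/(s-\lambda)$, and decreases thereafter; the hypotheses $1<\lambda\le2$ and $\lambda+1<s\le5$ keep $s-\lambda>1$, so $f$ is genuinely unimodal. On the two monotone flanks the elementary comparisons $f(m)\le\int^{m+1}_{m}f$ (valid where $f$ is increasing on $[m,m+1]$) and $f(m)\le\int^{m}_{m-1}f$ (valid where $f$ is decreasing on $[m-1,m]$) apply. Summing these over all indices except the one or two straddling $x^{\ast}$ telescopes and reduces the entire inequality to controlling the remaining near-peak terms together with the unused strip $\int^{1}_{0}f$.

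The hard part will be precisely this peak region. For $\lambda\le0$ the strip $\int^{1}_{0}f$ is comparatively large and easily absorbs the near-peak terms, which is exactly why the integral test and the earlier ranges treated in \cite[Lemma 2]{K&P} and \cite[Proposition 14]{B&J} go through; but for $\lambda>1$ the summand is small near the origin, so the crude estimate $f(N)+f(N+1)\le\int^{1}_{0}f+\int^{N+1}_{N}f$ is simply false and the surplus must instead be harvested from the rising flank. I would therefore not discard the slack in the flank comparisons but track it, writing
\begin{align*}
  \int^{\infty}_{0}f(x)\,dx-\sum^{\infty}_{m=1}f(m)=\int^{1}_{0}f(x)\,dx+\sum^{\infty}_{m=1}\left(\int^{m+1}_{m}f(x)\,dx-f(m)\right),
\end{align*}
where each term $\int^{m+1}_{m}f-f(m)$ is positive while $f$ rises and is negative only in a bounded window around $x^{\ast}$. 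The goal becomes to show that the accumulated positive surplus dominates this finite negative contribution.

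To make the balance rigorous I would exploit the convexity of $f$ on the initial part of its increasing range (for $\lambda>1$ the summand behaves like $x^{\lambda}/n^{s}$ near $0$ and is convex there) to bound each surplus $\int^{m+1}_{m}f-f(m)$ from below, and the concavity of $f$ in a neighborhood of $x^{\ast}$ to bound the deficit from above; this surplus-dominates-deficit mechanism is the quantitative sibling of the monotonicity phenomenon established in Theorem \ref{thm1} for the Bennett--Jameson sequence \eqref{1}, which corresponds to the slice $s=\lambda+3$ of the present parameter region. After rescaling so that the mesh is $h=1/n$, the comparison can be made uniform in $n$, and the whole problem reduces to an inequality in the two parameters $(\lambda,s)$ over the compact box $1<\lambda\le2$, $\lambda+1<s\le5$; I expect the extreme corner near $\lambda=2$ to be the tightest and hence the decisive case to verify. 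The combination of this estimate with Lemma \ref{lem40} then yields Theorem \ref{mainthm}.
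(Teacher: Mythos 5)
Your reduction of \eqref{1.4} to the statement $\sum_{m\ge1}f(m)\le\int_0^\infty f$ with $f(x)=x^{\lambda}/(x+n)^{s}$ is correct and matches the paper's starting point, and you have correctly located the difficulty: $f$ is unimodal rather than decreasing, so the plain integral test fails and the deficit from the decreasing flank must be paid for by the surplus from the rising flank plus $\int_0^1 f$. But the proposal stops exactly where the proof has to begin. First, the claim that the terms $\int_m^{m+1}f-f(m)$ are ``negative only in a bounded window around $x^{*}$'' is inaccurate: they are negative for every $m$ on the entire decreasing flank. More seriously, the total deficit there and the total surplus on the rising flank are both of order $f(x^{*})\asymp n^{\lambda-s}$ (each is controlled by a telescoping difference of $f$, hence by the peak value), and to leading order they cancel: a trapezoid-rule expansion shows each is approximately $\tfrac12 f(x^{*})$ plus corrections, so the sign of the whole expression is decided by second-order terms, while $\int_0^1 f\asymp n^{-s}$ is negligible for large $n$. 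Nothing in your outline resolves this competition; ``I would exploit the convexity \ldots and the concavity \ldots'' and ``I expect the extreme corner near $\lambda=2$ to be the tightest'' are plans, not estimates. The claimed reduction to a two-parameter inequality ``uniform in $n$'' after rescaling by $h=1/n$ is also unjustified, since the inequality must hold for every $n\ge1$, including $n=1$ where only one or two lattice points straddle the peak and there is no limiting regime in which the Riemann-sum error becomes a clean condition on $(\lambda,s)$ alone.

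The paper carries out precisely the second-order bookkeeping you defer: it applies the Euler--Maclaurin formula \eqref{2.01} with the explicit $B_2(\{t\})$ remainder, controls that remainder via \cite[Proposition 9.2.3]{Cohen} applied to the two pieces $g_{s,\lambda,n}$ and $h_{s,\lambda,n}$ of $f''$, expands $\int_0^1 f$ by repeated integration by parts, and then verifies six explicit polynomial inequalities $D_i(s,\lambda)\ge0$, $0\le i\le 5$, on the box $1<\lambda\le2$, $\lambda+1<s\le5$ (and it is exactly these verifications that force the restriction $s\le5$). An argument of comparable explicitness, with actual constants, is what your sketch is missing.
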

\begin{proof}
   We apply the following Euler-Maclaurin summation formula (see \cite[p. 152]{K&P}), which asserts that for $f(x) \in C^2[1, \infty)$ such that $\sum^{\infty}_{k=1} f(k)<\infty, \int^{\infty}_1f(t)dt<\infty$ and $\lim_{x \rightarrow \infty}f^{(r)}(x) = 0, 0 \leq r \leq 1$, then the following equality holds:
\begin{align}
\label{2.01}
  \sum^{\infty}_{k=1} f(k)=\int^{\infty}_1f(t)dt+\frac {f(1)}{2}-\frac 1{12}f'(1)-\frac 1{2}\int^{\infty}_{1}B_2(\{ t \})f''(t)dt,
\end{align}
  where we denote by $\{x \}$ the fractional part of $x$,
the unique real number in $[0, 1)$ such that $x-\{x\} \in \mathbb{Z}$ and $B_2(x)=x^2-x+1/2$ the second Bernoulli polynomial.  

   Further, it follows from \cite[Proposition 9.2.3]{Cohen} that if $f \in C^6[1,\infty), \lim_{x \rightarrow \infty}f^{(r)}(x) = 0, 0 \leq r \leq 5$, 
and $f^{(r)}(x) < 0, r=4,6$, then the following inequality holds:
\begin{align*}
   \frac 1{720}f'(1)-\frac 1{720\cdot 42}f^{(3)}(1) \leq -\frac 1{2}\int^{\infty}_{1}B_2(\{ t \})f(t)dt \leq  \frac 1{720}f'(1).
\end{align*}

   We use the notion on \cite[p. 152]{K&P} to define for real numbers $\lambda, s$ and $t>0$,
\begin{align*}
   f_{s, \lambda, n}(t)=\frac {t^{\lambda}}{(t+n)^s}.
\end{align*}
   We note that
\begin{align*}
   f'_{s, \lambda, n}(t) &=\frac {nst^{\lambda-1}}{(t+n)^{s+1}}-\frac {(s-\lambda)t^{\lambda-1}}{(t+n)^s}, \\
   f''_{s, \lambda, n}(t) &=g_{s, \lambda, n}(t)+h_{s, \lambda, n}(t),
\end{align*}
   where
\begin{align*}
  g_{s, \lambda, n}(t)=\frac {(s+1-\lambda)(s-\lambda)t^{\lambda-2}}{(t+n)^s}+\frac {n^2s(s+1)t^{\lambda-2}}{(t+n)^{s+2}}, \quad h_{s, \lambda, n}(t)=-\frac {2ns(s+1-\lambda)t^{\lambda-2}}{(t+n)^{s+1}}.
\end{align*}
   It follows from our discussion above that when $1<\lambda \leq 2, s>\lambda$,
\begin{align*}
  -\frac 1{2}\int^{\infty}_{1}B_2(\{ t \})g_{s, \lambda, n}(t)dt & \leq \frac 1{720}g'_{s, \lambda, n}(1)-\frac 1{720\cdot 42}g^{(3)}_{s, \lambda, n}(1),  \\ -\frac 1{2}\int^{\infty}_{1}B_2(\{ t \})h_{s, \lambda, n}(t)dt & \leq \frac 1{720}h'_{s, \lambda, n}(1).
\end{align*}
   It now follows from \eqref{2.01} that
\begin{align*}
 & \sum^{\infty}_{k=1}  f_{s, \lambda, n}(k)-\int^{\infty}_0f_{s, \lambda, n}(t)dt \\
\leq & -\int^1_0f_{s, \lambda, n}(t)dt+\frac {f_{s, \lambda, n}(1)}{2}-\frac {f'_{s, \lambda, n}(1)}{12}+\frac 1{720}f^{(3)}_{s, \lambda, n}(1)-\frac 1{720\cdot 42}g^{(3)}_{s, \lambda, n}(1).
\end{align*}
   Thus, inequality \eqref{1.4} is valid provided that we show
\begin{align}
\label{2.02}
   D_{s,\lambda}(n):=\int^1_0f_{s, \lambda, n}(t)dt-\frac 1{2}f_{s, \lambda, n}(1)+\frac {f'_{s, \lambda, n}(1)}{12}-\frac 1{720}f^{(3)}_{s, \lambda, n}(1)+\frac 1{720\cdot 42}g^{(3)}_{s, \lambda, n}(1)\geq 0.
\end{align}
  Integration by parts shows that (see \cite[p. 153]{K&P}) 
\begin{align*}
   \int^1_0f_{s, \lambda, n}(t)dt \geq \sum^{5}_{i=0}\frac 1{(n+1)^{s+i}}\cdot \frac {\prod^{i}_{j=1}(s+j-1)}{\prod^{i+1}_{j=1}(j+\lambda)},
\end{align*}
  where we define the empty product to be $1$.

  We also have
\begin{align*}
 -\frac 1{2}f_{s, \lambda, n}(1) &=-\frac 1{2(n+1)^s}, \\
 \frac {f'_{s, \lambda, n}(1)}{12} &=\frac {\lambda}{12(n+1)^s}-\frac {s}{12(n+1)^{s+1}}, \\
 -\frac 1{720}f^{(3)}_{s, \lambda, n}(1) &=-\frac 1{720}\left ( \frac {(\lambda-1)(\lambda-2)(\lambda-3)}{(n+1)^s}-\frac {3s\lambda(\lambda-1)}{(n+1)^{s+1}}+\frac {3s(s+1)\lambda}{(n+1)^{s+2}}-\frac {s(s+1)(s+2)}{(n+1)^{s+3}} \right ),\\
g^{(3)}_{s, \lambda, n}(1) &=\frac {(s+1-\lambda)(s-\lambda)(\lambda-2)(\lambda-3)(\lambda-4)}{(n+1)^s}-\frac {3s(s+1-\lambda)(s-\lambda)(\lambda-2)(\lambda-3)}{(n+1)^{s+1}}\\
 &+\frac {3s(s+1)(s+1-\lambda)(s-\lambda)(\lambda-2)}{(n+1)^{s+2}}-\frac {3s(s+1)(s+2)(s+1-\lambda)(s-\lambda)}{(n+1)^{s+3}} \\
& +\frac {n^2s(s+1)(\lambda-2)(\lambda-3)(\lambda-4)}{(n+1)^{s+2}}-\frac {3n^2s(s+1)(s+2)(\lambda-2)(\lambda-3)}{(n+1)^{s+3}} \\
& +\frac {3n^2s(s+1)(s+2)(s+3)(\lambda-2)}{(n+1)^{s+4}}-\frac {n^2s(s+1)(s+2)(s+3)(s+4)}{(n+1)^{s+5}}.
\end{align*}
   Apply this in \eqref{2.02}, we see that
\begin{align*}
   D_{s,\lambda}(n) \geq \sum^{5}_{i=0}\frac 1{(n+1)^{s+i}}D_i(s,\lambda),
\end{align*}
   where
\begin{align*}
   D_0(s,\lambda) &=\frac 1{1+\lambda}-\frac 1{2}+\frac {\lambda}{12}-\frac {(\lambda-1)(\lambda-2)(\lambda-3)}{720} \\
& +\frac {(\lambda-2)(\lambda-3)(\lambda-4)}{720\cdot 42}\left ( (s+1-\lambda)(s-\lambda)+s(s+1)\right ), \\
  D_1(s,\lambda) &=\frac s{(1+\lambda)(2+\lambda)}-\frac s{12}+\frac {3s\lambda(\lambda-1)}{720} \\
& -\frac {s(\lambda-2)(\lambda-3)}{720\cdot 42}\left ( 3(s+1-\lambda)(s-\lambda)+2(s+1)(\lambda-4)+3(s+1)(s+2)\right ), \\ 
  D_2(s,\lambda) & =\frac {s(s+1)}{(1+\lambda)(2+\lambda)(3+\lambda)}-\frac {3s(s+1)\lambda}{720}+\frac {s(s+1)(\lambda-2)}{720\cdot 42} \cdot \\
& \cdot \Big (3(s+1-\lambda)(s-\lambda)+(\lambda-3)(\lambda-4)+6(s+2)(\lambda-3)+3(s+2)(s+3) \Big ), \\
 D_3(s,\lambda) & =\frac {s(s+1)(s+2)}{(1+\lambda)(2+\lambda)(3+\lambda)(4+\lambda)}+\frac {s(s+1)(s+2)}{720}  -\frac {s(s+1)(s+2)}{720\cdot 42} \cdot \\
& \cdot \Big (3(s+1-\lambda)(s-\lambda)+3(\lambda-2)(\lambda-3)+6(s+3)(\lambda-2)+(s+3)(s+4)\Big  ), \\
 D_4(s,\lambda) & =\frac {s(s+1)(s+2)(s+3)}{(1+\lambda)(2+\lambda)(3+\lambda)(4+\lambda)(5+\lambda)}+\frac {s(s+1)(s+2)(s+3)}{720\cdot 42}\left (3(\lambda-2)+2(s+4)\right ), \\
D_5(s,\lambda) & =s(s+1)(s+2)(s+3)(s+4)\left (\frac {1}{(1+\lambda)(2+\lambda)(3+\lambda)(4+\lambda)(5+\lambda)(6+\lambda)}-\frac {1}{720\cdot 42}\right ).
\end{align*}

  We now assume that $s > \lambda+1, 1< \lambda \leq 2$,  it is then easy to see that $D_4(s,\lambda) \geq 0, D_5(s,\lambda) \geq 0$ when $1<\lambda \leq 2$. We apply the bounds $(1+\lambda)(2+\lambda)(3+\lambda) \leq 60, (1+\lambda)(2+\lambda)(3+\lambda)(4+\lambda) \leq 360$ when $1< \lambda \leq 2$ to see that $D_2(s,\lambda) \geq 0, D_3(s,\lambda) \geq 0$ respectively, when
\begin{align*}
 6\cdot 42 & \geq 3(s+1-\lambda)(s-\lambda)+(\lambda-3)(\lambda-4)+6(s+2)(\lambda-3)+3(s+2)(s+3), \\
 126 & \geq 3(s+1-\lambda)(s-\lambda)+3(\lambda-2)(\lambda-3)+6(s+3)(\lambda-2)+(s+3)(s+4).
\end{align*}
   For fixed $s$, the right-hand expressions above are increasing functions of $\lambda$ and hence are maximized when $\lambda=2$. Thus, the above inequalities are valid for all $1< \lambda \leq 2, s>\lambda+1$ provided that the following inequalities are valid: 
\begin{align*}
  6\cdot 42 & \geq 3(s-1)(s-2)+2-6(s+2)+3(s+2)(s+3)=6s^2+14, \\
  126 & \geq 3(s-1)(s-2)+(s+3)(s+4)=4s^2-2s+18.
\end{align*}
   One checks readily that the above inequalities are valid for $\lambda+1< s \leq 5$.

   Note that
\begin{align*}
  \frac s{(1+\lambda)(2+\lambda)}-\frac s{12}=\frac {s(2-\lambda)(5+\lambda)}{12(1+\lambda)(2+\lambda)}.
\end{align*}
   It is easy to see that $D_1(s,\lambda) \geq 0$ when $\lambda=2$ for any $s>\lambda+1$. When $1< \lambda <2$, we see that $D_1(s,\lambda) \geq 0$ is equivalent to the following inequality:
\begin{align*}
 & \frac {60\cdot 42(5+\lambda)}{(1+\lambda)(2+\lambda)}+\frac {3\cdot 42\lambda(\lambda-1)}{(2-\lambda)} \\
 \geq & (3-\lambda)\left ( 3(s+1-\lambda)(s-\lambda)+2(s+1)(\lambda-4)+3(s+1)(s+2)\right ).
\end{align*}
   It is easy to see that the left-hand side expression above is $\geq 60\cdot 42\cdot 6/(3 \cdot 4)=30 \cdot 42$ while the right-hand side expression above is $\leq 3( 3(s+1-1)(s-1)+3(s+1)(s+2))=18(s^2+s+1)$. It follows that the above inequality is valid for $\lambda+1 < s \leq 5$.

   Note that
\begin{align*}
   \frac 1{1+\lambda}-\frac 1{2}+\frac {\lambda}{12}=\frac {(\lambda-2)(\lambda-3)}{12(1+\lambda)}.
\end{align*}
   Thus, when $1< \lambda \leq 2$ to see that $D_0(s,\lambda) \geq 0$ follows from 
\begin{align*}
  \frac {60\cdot 42}{1+\lambda}-42\cdot (\lambda-1) \geq (4-\lambda)\left ( (s+1-\lambda)(s-\lambda)+s(s+1)\right ).
\end{align*}
   It is easy to see that the left-hand side expression above is $\geq 19\cdot 42$. We apply the bound $4-\lambda \leq 3,  (s+1-\lambda)(s-\lambda)\leq s(s-1)$ to see that $D_0(s,\lambda) \geq 0$ follows from 
\begin{align*}
  19\cdot 42 \geq 6s^2,
\end{align*}
   which is valid for $\lambda+1 < s \leq 5$. This completes the proof. 
\end{proof}
  
%%----------------------------------------------------------------------------
\section{Further Discussions}
\label{sec 3} \setcounter{equation}{0}
%%----------------------------------------------------------------------------
%%---------------------------------------------------------------------------
  In this section, we first improve the result of \cite[Lemma 2]{K&P} by showing that inequality \eqref{1.4} is valid for any $s>2$ when $\lambda=1$.
\begin{prop}
\label{lem31}
   Let $s >2, C_{s}=((s-2)(s-1))^{-1}$, then for 
  for any integer $n \geq 1$,
\begin{align}
\label{2.1}
   \sum^{\infty}_{m=1}\frac {m}{(n+m)^{s}} \leq \frac {C_{s}}{n^{s-2}}.
\end{align}
\end{prop}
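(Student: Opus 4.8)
The plan is to sidestep a direct comparison of the sum with the integral $\int_0^\infty t(n+t)^{-s}\,dt$. That integral does equal the claimed bound $C_s n^{2-s}$ (by \eqref{1.3}), but its integrand $t/(n+t)^s$ is not monotonic: it increases on $(0,n/(s-1))$ and decreases afterward, so the naive integral test breaks down near the peak. Circumventing this non-monotonicity is precisely the obstacle, and I would dissolve it by passing to the Laplace (Gamma) representation of both sides, where the whole question collapses to a single pointwise inequality.

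First I would use $a^{-s}=\Gamma(s)^{-1}\int_0^\infty x^{s-1}e^{-ax}\,dx$ with $a=n+m$ to write
\begin{align*}
  \sum_{m=1}^\infty \frac{m}{(n+m)^s}=\frac{1}{\Gamma(s)}\int_0^\infty x^{s-1}e^{-nx}\Big(\sum_{m=1}^\infty m e^{-mx}\Big)dx=\frac{1}{\Gamma(s)}\int_0^\infty \frac{x^{s-1}e^{-nx}e^{-x}}{(1-e^{-x})^2}\,dx,
\end{align*}
where the interchange of sum and integral is justified by Tonelli's theorem (all terms are nonnegative) and I have summed $\sum_{m\ge 1}mz^m=z/(1-z)^2$ at $z=e^{-x}$. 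Next I would rewrite the constant side the same way: since $\Gamma(s)=(s-1)(s-2)\Gamma(s-2)$ and $n^{2-s}=\Gamma(s-2)^{-1}\int_0^\infty x^{s-3}e^{-nx}\,dx$ for $s>2$, the target becomes
\begin{align*}
   \frac{C_s}{n^{s-2}}=\frac{1}{\Gamma(s)}\int_0^\infty x^{s-3}e^{-nx}\,dx=\frac{1}{\Gamma(s)}\int_0^\infty \frac{x^{s-1}e^{-nx}}{x^2}\,dx.
\end{align*}

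With both sides now of the form $\Gamma(s)^{-1}\int_0^\infty x^{s-1}e^{-nx}(\cdots)\,dx$, the inequality \eqref{2.1} reduces to the single pointwise claim
\begin{align*}
   \frac{e^{-x}}{(1-e^{-x})^2}\le \frac{1}{x^2},\qquad x>0.
\end{align*}
This is the heart of the matter, and it is elementary: writing $1-e^{-x}=e^{-x/2}(e^{x/2}-e^{-x/2})=2e^{-x/2}\sinh(x/2)$ gives $e^{-x}/(1-e^{-x})^2=1/(4\sinh^2(x/2))$, so the claim is equivalent to $x/2\le \sinh(x/2)$, which holds for all $x\ge 0$ because $\sinh u=u+u^3/6+\cdots\ge u$.

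Finally I would subtract the two representations to obtain
\begin{align*}
   \frac{C_s}{n^{s-2}}-\sum_{m=1}^\infty \frac{m}{(n+m)^s}=\frac{1}{\Gamma(s)}\int_0^\infty x^{s-1}e^{-nx}\Big(\frac{1}{x^2}-\frac{e^{-x}}{(1-e^{-x})^2}\Big)dx\ge 0,
\end{align*}
the integrand being nonnegative by the pointwise bound, with convergence near $x=0$ holding exactly because $s>2$ makes $x^{s-3}$ integrable there. I expect the only step demanding genuine insight, rather than routine bookkeeping, to be spotting the Laplace-transform reduction and the hyperbolic-sine identity; once those are in hand, the unimodality obstruction that defeats the naive integral test disappears entirely, and the argument works uniformly for every $s>2$.
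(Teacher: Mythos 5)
Your proof is correct, and it takes a genuinely different route from the paper. The paper works with the difference $f_s(n)$ between the two sides of \eqref{2.1} (after splitting $m=(n+m)-n$), shows $f_s(n)\to 0$ as $n\to\infty$, and telescopes: the claim reduces to $f_s(n)-f_s(n+1)\ge 0$, which is handled by invoking a tail estimate of Cochran and Lee for $\sum_{i\ge k}i^{-s}$ and then recognizing the resulting inequality as an instance of the Cauchy--Schwarz inequality applied to $\bigl(\int_n^{n+1}x^{s-1}dx\bigr)\bigl(\int_n^{n+1}x^{1-s}dx\bigr)\ge 1$. Your argument instead writes both sides as Laplace transforms, $\sum_m m(n+m)^{-s}=\Gamma(s)^{-1}\int_0^\infty x^{s-1}e^{-nx}\,e^{-x}(1-e^{-x})^{-2}dx$ and $C_s n^{2-s}=\Gamma(s)^{-1}\int_0^\infty x^{s-3}e^{-nx}dx$, and reduces the whole proposition to the pointwise bound $4\sinh^2(x/2)\ge x^2$, i.e.\ $\sinh u\ge u$; all the intermediate steps (Tonelli, the identity $\sum m z^m=z/(1-z)^2$, $\Gamma(s)=(s-1)(s-2)\Gamma(s-2)$, integrability of $x^{s-3}$ at the origin for $s>2$) check out. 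What your route buys is self-containedness and transparency: it needs no external lemma and dissolves the non-monotonicity obstruction in one stroke, and the same template immediately handles $\sum m^\lambda(n+m)^{-s}$ for other integer $\lambda$ by replacing $z/(1-z)^2$ with the appropriate polylogarithm-type sum. What the paper's route buys is that it stays entirely within elementary series manipulations (no Gamma-function integral representation), at the cost of importing the Cochran--Lee estimate.
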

\begin{proof}
  Note first that the condition $s>2$ ensures that the infinite series in \eqref{2.1} converges. Let
\begin{align*}
  f_{s}(n):= \frac {C_{s}}{n^{s-2}}-\left( \sum^{\infty}_{m=1}\frac {1}{(n+m)^{s-1}}-n\sum^{\infty}_{m=1}\frac {1}{(n+m)^{s}} \right ).
\end{align*}

    As $\sum^{\infty}_{i=1}i^{1-s}<\infty$, it follows that 
\begin{align*}
  \lim_{n \rightarrow \infty}\sum^{\infty}_{m=1}\frac {1}{(n+m)^{s-1}}=0.
\end{align*}

    Note also that
 \begin{align*}
  0 \leq \lim_{n \rightarrow \infty}n\sum^{\infty}_{m=1}\frac {1}{(n+m)^{s}} \leq \lim_{n \rightarrow \infty}n\int^{\infty}_{0}\frac {1}{(n+x)^{s}}dx=\lim_{n \rightarrow \infty}\frac  {1}{(s-1)n^{s-2}}=0.
\end{align*}
   We then deduce that $\lim_{n \rightarrow \infty}f_{s}(n)=0$. Therefore, it suffices to show that $f_{s}(n)-f_{s}(n+1)\geq 0$. Calculation shows that
\begin{align*}
  f_{s}(n)-f_{s}(n+1)=\frac {C_{s}}{n^{s-2}}-\frac {C_{s}}{(n+1)^{s-2}}-\sum^{\infty}_{m=1}\frac {1}{(n+m)^{s}}.
\end{align*} 
  Note that \cite[Lemma 3]{C&L} asserts that for $s>1$,
\begin{align*}
 \sum^{\infty}_{i=k}\frac 1{i^{s}} \leq \frac {s}{s-1} \cdot \frac 1{k^{s}-(k-1)^{s}}.
\end{align*} 
  We apply this to see that it suffices to show that
\begin{align*}
 \frac {C_{s}}{n^{s-2}}-\frac {C_{s}}{(n+1)^{s-2}}- \frac {s}{s-1} \cdot \frac 1{(n+1)^{s}-n^{s}} \geq 0.
\end{align*} 
   We can recast the above inequality as
\begin{align*}
%%\label{31}
 \left (\int^{n+1}_nx^{s-1}dx\right ) \left ( \int^{n+1}_n x^{1-s}dx\right )\geq 1.
\end{align*} 
   As the above inequality follows from the Cauchy-Schwarz inequality, this completes the proof.
\end{proof}
%%-----------------------------------------------------------------------------------------
  
   We end this paper by proving Theorem \ref{thm1}. It amounts to show that
\begin{align*}
    \frac 1{n^{\alpha+2}}\sum^{n}_{r=1}r^{\alpha}(n-r) \leq  \frac 1{(n+1)^{\alpha+2}}\sum^{n}_{r=1}r^{\alpha}(n+1-r).
\end{align*}
   The above inequality can be rewritten as
\begin{align*}
    \left( n-\frac {n^{\alpha+2}}{(n+1)^{\alpha+2}-n^{\alpha+2}}\right )\sum^{n}_{r=1}r^{\alpha} \leq \sum^{n}_{r=1}r^{\alpha+1}.
\end{align*}
   The above inequality holds trivially for $n=1$ and therefore by induction, it suffices to show that
\begin{align*}
&  \left( n+1-\frac {(n+1)^{\alpha+2}}{(n+2)^{\alpha+2}-(n+1)^{\alpha+2}}\right )\sum^{n+1}_{r=1}r^{\alpha} -\left( n-\frac {n^{\alpha+2}}{(n+1)^{\alpha+2}-n^{\alpha+2}}\right )\sum^{n}_{r=1}r^{\alpha} \\
& \leq \sum^{n+1}_{r=1}r^{\alpha+1}-\sum^{n}_{r=1}r^{\alpha+1}=(n+1)^{\alpha+1}.
\end{align*}
   After simplification, the above inequality becomes
\begin{align*}
  \left( \frac{((n+1)/n)^{\alpha+2}}{((n+1)/n)^{\alpha+2}-1}-\frac {1}{((n+2)/(n+1))^{\alpha+2}-1}\right )\sum^{n}_{r=1}r^{\alpha} \leq \frac {(n+1)^{\alpha}}{((n+2)/(n+1))^{\alpha+2}-1}.
\end{align*}
   Further simplification yields
\begin{align*}
  \left( \frac{(n+2)^{\alpha+2}-(n+1)^{\alpha+2}}{(n+1)^{\alpha+2}-n^{\alpha+2}}-1\right )\sum^{n}_{r=1}r^{\alpha} \leq (n+1)^{\alpha}.
\end{align*}
   The above inequality is equivalent to the following inequality:
\begin{align}
\label{10}
  \frac {\sum^{n}_{r=1}r^{\alpha}}{\sum^{n+1}_{r=1}r^{\alpha}}\leq \frac {(n+1)^{\alpha+2}-n^{\alpha+2}}{(n+2)^{\alpha+2}-(n+1)^{\alpha+2}}.
\end{align}
  
   We note the following lemma:
\begin{lemma}[{\cite[Lemma 2.1]{Xu}}]
\label{lem3}
   Let $\{B_n \}^{\infty}_{n=1}$ and $\{C_n \}^{\infty}_{n=1}$ be strictly increasing positive sequences with
   $B_1/B_2 \leq C_1 / C_2$. If for any integer $n \geq 1$,
\begin{equation*}
  \frac {B_{n+1}-B_n}{B_{n+2}-B_{n+1}} \leq  \frac
  {C_{n+1}-C_n}{C_{n+2}-C_{n+1}}.
\end{equation*}
  Then $B_{n}/B_{n+1} \leq C_{n} / C_{n+1}$ for any integer $n \geq 1$.
\end{lemma}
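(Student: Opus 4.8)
The plan is to prove the conclusion by induction on $n$, after recasting both the standing hypothesis and the target inequality in terms of the difference sequences. Set $b_n=B_{n+1}-B_n$ and $c_n=C_{n+1}-C_n$; strict monotonicity gives $b_n,c_n>0$, so every ratio below is well defined and reciprocation reverses inequalities. The base case $n=1$ is exactly the assumed inequality $B_1/B_2\le C_1/C_2$.

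The crucial step is a pair of reformulations. Using the backward relation $B_n=B_{n+1}-b_n$, the inductive hypothesis $B_n/B_{n+1}\le C_n/C_{n+1}$ becomes $1-b_n/B_{n+1}\le 1-c_n/C_{n+1}$, that is, $b_n/B_{n+1}\ge c_n/C_{n+1}$, which by reciprocation is equivalent to $B_{n+1}/b_n\le C_{n+1}/c_n$. Dually, using the forward relation $B_{n+2}=B_{n+1}+b_{n+1}$, the target inequality $B_{n+1}/B_{n+2}\le C_{n+1}/C_{n+2}$ becomes $b_{n+1}/B_{n+1}\ge c_{n+1}/C_{n+1}$, that is, $B_{n+1}/b_{n+1}\le C_{n+1}/c_{n+1}$. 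Thus both the hypothesis and the goal of the inductive step are expressed with the same numerator $B_{n+1}$ over a backward, respectively forward, difference, and the standing hypothesis on the differences, $b_n/b_{n+1}\le c_n/c_{n+1}$, is precisely what links the two.

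The inductive step is then immediate: multiplying the (reformulated) hypothesis $B_{n+1}/b_n\le C_{n+1}/c_n$ by the difference inequality $b_n/b_{n+1}\le c_n/c_{n+1}$, both being inequalities between positive quantities, yields
\[
\frac{B_{n+1}}{b_{n+1}}=\frac{B_{n+1}}{b_n}\cdot\frac{b_n}{b_{n+1}}\le \frac{C_{n+1}}{c_n}\cdot\frac{c_n}{c_{n+1}}=\frac{C_{n+1}}{c_{n+1}},
\]
which is exactly the reformulated conclusion for index $n+1$. I do not anticipate a genuine obstacle: the only care needed is to track the direction of inequalities through the reciprocations and to invoke positivity of $b_n,c_n,B_{n+1},C_{n+1}$ when multiplying. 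The one idea driving the whole argument is the equivalence $B_n/B_{n+1}\le C_n/C_{n+1}\iff B_{n+1}/b_n\le C_{n+1}/c_n$, which trades a comparison of consecutive terms for one involving a single difference, letting the hypothesis on the differences enter directly.
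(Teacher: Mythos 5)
Your proof is correct. Note that the paper itself gives no proof of this lemma---it is quoted verbatim from \cite[Lemma 2.1]{Xu}---so there is no argument in the paper to compare against. Your induction is a clean, self-contained verification: the base case is exactly the hypothesis $B_1/B_2\le C_1/C_2$, the two reformulations $B_n/B_{n+1}\le C_n/C_{n+1}\iff B_{n+1}/(B_{n+1}-B_n)\le C_{n+1}/(C_{n+1}-C_n)$ and $B_{n+1}/B_{n+2}\le C_{n+1}/C_{n+2}\iff B_{n+1}/(B_{n+2}-B_{n+1})\le C_{n+1}/(C_{n+2}-C_{n+1})$ are valid equivalences between positive quantities, and multiplying the first by the difference-ratio hypothesis legitimately yields the second, closing the induction.
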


   Applying the above lemma with $B_n=\sum^{n}_{r=1}r^{\alpha}, C_n=(n+1)^{\alpha+2}-n^{\alpha+2}$ and observing that the sequence $\{C_n \}^{\infty}_{n=1}$ is strictly increasing by the Mean Value Theorem, we see that inequality \eqref{10} holds provided that we have
\begin{align*}
 &  \frac {1}{1^{\alpha}+2^{\alpha}}\leq \frac {2^{\alpha+2}-1}{3^{\alpha+2}-2^{\alpha+2}}, \\
 &  \frac {(n+1)^{\alpha}}{(n+2)^{\alpha}} \leq \frac {(n+2)^{\alpha+2}-2(n+1)^{\alpha+2}+n^{\alpha+2}}{(n+3)^{\alpha+2}-2(n+2)^{\alpha+2}+(n+1)^{\alpha+2}}, \quad n \geq 1.
\end{align*}
 
   It is easy to see that the first inequality above follows from the case $n=0$ of the second inequality. It therefore remains to prove the second inequality above for $n \geq 0$. We recast the second inequality above as $f(1/(n+2)) \leq f(1/(n+1))$, where
\begin{align*}
  f(x)=x^{-2}\Big ( (1+x)^{\alpha+2}+(1-x)^{\alpha+2}-2\Big ).
\end{align*}
   Note that
  \begin{align*}
  \frac {x^2}{2}\cdot f'(x)=\frac {g(x)+g(0)}{2}-\frac 1{x}\int^x_0g(t)dt, 
  \end{align*}
      where
  \begin{align*}
      g(x)=(\alpha+2)(1+x)^{\alpha+1}-(\alpha+2)(1-x)^{\alpha+1}.
  \end{align*}
    As $g(x)$ is convex when $0<x \leq 1$,  it follows from the Hermite-Hadamard inequality \cite[p. 10]{MPF} that $f'(x)\geq 0$ when $0<x \leq 1$ and this completes the proof.

%%----------------------------------------------------------------------------------

%%------------------------------------------------------------------------

\end{document}